\theoremstyle{definition}
\newtheorem{Assu}{Assumption}
\newtheorem{Th}{Theorem}
\newtheorem{Def}{Definition}
\newtheorem{rem}{Remark}
\newtheorem{Lem}{Lemma}
\newtheorem{Prop}[Lem]{Proposition}
\title{Scattering theory for Schr\"odinger equations with time-dependent short-range potentials via wave packet transform}
\author{Taisuke Yoneyama and Keiichi Kato}
\date{\today}
\newcommand{\dint}{\displaystyle\int}
\begin{document}
\maketitle
%
%
\section{Introduction}

In this paper, we consider the existence and the asymptotic completeness of the wave operators for Schr\"odinger equations with time-dependent potentials which are short-range in space. 
For time-independent short-range potentials, the wave operators have been studied since 1950s by methods of using properties of spectrum (e.g., J. Cook \cite{cook}, S. Kuroda \cite{kuroda}).
Then the ranges of the wave operators are characterized by the absolutely continuous subspace.
V. Enss \cite{Enss} has proved the existence of the wave operators and that the non-trivial singular continuous subspace of $H$ is nothing.
Then the ranges of the wave operators are characterized by the continuous subspace of the operator.
For time-dependent short-range potentials,
D. R. Yafaev \cite{Yafaev} has shown the existence of the wave operators with time-dependent potentials,
and that the ranges coincide with the entire space under the suitable conditions.
H. Kitada--K. Yajima \cite{KY} shows the existence the wave operators and the modified wave operators with time-dependent potentials which are short-range in space
and has characterized the ranges of the (modified) wave operators.
In this paper, we prove the existence and characterize the ranges of the wave operators
for Schr\"odinger equations with time-dependent potentials which are short-range in space
by introducing the wave packet transform.

We consider the following problem:
\begin{align}
\label{siki-1-nonini}
i\frac{\partial}{\partial t}u = H(t)u, \quad H(t)= H_0+V(t),\quad H_0
=-\frac12\displaystyle\sum_{j=1}^{n}\displaystyle\frac{\partial^2}{\partial x_j^2}=-\frac12\Delta
\end{align}
in the Hilbert space $\mathcal{H}=L^2(\mathbb{R}^n)$, and the domain $D(H_0)=H^2(\mathbb{R}^{n})$ where $H^2(\mathbb{R}^{n})$ is Sobolev space of order two.

We assume that $V$ is satisfied with the following conditions called ``short-range''.\\
\begin{Assu}
\hspace{11pt}(i) $V(t,x)$ is a real-valued function of $(t,x)\in\mathbb{R}\times\mathbb{R}^{n}$.\\
\hspace{9pt}(ii)There exists a constant $\delta>1$ such that there exists $C>0$ satisfying
\begin{align*}
|V(t,x)|\leq 
C(1+|x|)^{-\delta},\quad(t,x)\in\mathbb{R}\times\mathbb{R}^{n}.
\end{align*}
\end{Assu}
Under Assumption (A), $H(t)$ is self adjoint in $L^2(\mathbb{R}^{n})$ with the domain $D(H(t))=H^2(\mathbb{R}^{n})$ for each $t\in\mathbb{R}$.
Then there exist unitary evolution operators $U(t,t_0)$ satisfying the following conditions (c.f., T. Kato \cite{TKato}).

\noindent\hspace{11pt}(i) $U(t,t_0)f$ is continuous function of $t$ and
\[
U(t,\tau')U(\tau',\tau)=U(t,\tau),\,U(t,t)=Id\quad t,\tau',\tau\in\mathbb{R}.
\]
\hspace{9pt}(ii) $U(t,\tau)f$ is continuously differentiable and
\[
\frac{\partial}{\partial t}U(t,\tau)f=-iH(t)U(t,\tau)f\quad f\in L^2(\mathbb{R}^{n}).
\]

Let $a\geq0$ and $R>0$.
We define $\Gamma=\Gamma_{a,R}$ by
\[
\Gamma_{a,R}=\left\{(x,\xi)\in\mathbb{R}^n\times\mathbb{R}^n\Big\vert|\xi|\leq a\mbox{ or }|x|\geq R\right\},
\]
and $\mathcal{S}_{scat}$ by
\[
\mathcal{S}_{scat}=\left\{\Phi\in\mathcal{S}\Big\vert\|\Phi\|_{L^2(\mathbb{R}^{n})}=1 \mbox{ and }\hat{\Phi}(0)\neq0\right\}.
\]
\begin{Def}\label{wpt-def}
(Wave packet transform)

Let $\varphi\in\mathcal{S}\setminus\left\{ 0\right\}$ and $f\in\mathcal{S'}$.
We define the wave packet transform $W_\varphi f(x,\xi)$ of $f$ 
with the wave packet generated by a function $\varphi$ as follows:
\[
W_\varphi f(x,\xi)=\int_{\mathbb{R}^{n}}\overline{\varphi(y-x)}f(y)e^{-iy\xi}dy,
\quad (x,\xi)\in\mathbb{R}^{n}\times\mathbb{R}^{n}.
\]

We also define the inverse of the wave packet transform $W_{\varphi}^{-1}$ 
for a function $F(x,\xi)$ on $\mathbb{R}^{n}\times\mathbb{R}^{n}$ as follows:
\[
W_\varphi^{-1} F(x)=\frac{1}{(2\pi)^n\|\varphi\|_{L^2(\mathbb{R}^{n})}^2}\int\int_{\mathbb{R}^{2n}}\varphi(x-y)F(y,\xi)e^{ix\xi}dyd\xi,
\quad x\in\mathbb{R}^{n}.
\]
\end{Def}
\begin{Def}\label{scat-def}
(Scattering subspace)

Let $\Phi\in\mathcal{S}_{scat}$.
We define $\tilde{D}_{scat}^{\pm,\Phi}(\tau)$ by the set of all functions in $L^2$ satisfying
that there exist $a>0$ and $R>0$ such that
\[
\chi_{\Gamma_{a,R}}W_{\Phi(t)}[U(t,\tau)f](x+(t-\tau)\xi,\xi)\rightarrow 0 \,(t\rightarrow\pm\infty)\quad\mbox{in } L^2(\mathbb{R}^{n}\times\mathbb{R}^{n})
\]
where $\Phi(t)=e^{-i(t-\tau)H_0}\Phi$.

The scattering subspaces $D_{scat}^{\pm,\Phi}(\tau)$ is defined by the closure of $\tilde{D}_{scat}^{\pm,\Phi}(\tau)$.
\end{Def}

The aim of this paper is to prove the following theorem.
\begin{Th}\label{main-theorem}
Suppose that (A) be satisfied.
Then the wave operators
\begin{align*}
W_{\pm}(\tau)=\mbox{s-}\hspace{-2mm}\lim_{t\rightarrow\pm\infty}U(\tau,t)e^{-i(t-\tau)H_0}
\end{align*}
exist for any $\tau\in\mathbb{R}$ 
and their ranges $\mathcal{R}(W_{\pm}(\tau))$ coincide with $D_{scat}^{\pm,\Phi}(\tau)$ for any $\Phi\in\mathcal{S}_{scat}$.

In particular, $D_{scat}^{\pm,\Phi}(\tau)$ is independent of $\Phi$.
So in the sequel, we write $D_{scat}^{\pm,\Phi}(\tau)$ as $D_{scat}^{\pm}(\tau)$.
\end{Th}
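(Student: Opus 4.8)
The plan is to treat the two claims—existence of $W_\pm(\tau)$ and the identification $\mathcal{R}(W_\pm(\tau))=D_{scat}^{\pm,\Phi}(\tau)$—separately, using throughout the way the wave packet transform intertwines with the free flow. Writing $\Phi(t)=e^{-i(t-\tau)H_0}\Phi$, the basic tool is the transport identity
\[
W_{\Phi(t)}[e^{-i(t-\tau)H_0}g](x,\xi)=e^{-i(t-\tau)|\xi|^2/2}(W_\Phi g)(x-(t-\tau)\xi,\xi),
\]
which I would establish by differentiating the left-hand side in $t$ to get the transport equation $\partial_t W+\xi\cdot\nabla_x W+\tfrac{i}{2}|\xi|^2 W=0$ and integrating along $x\mapsto x+(t-\tau)\xi$. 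Together with the isometry $\|W_\Phi g\|_{L^2(\mathbb{R}^{2n})}^2=(2\pi)^n\|\Phi\|_{L^2}^2\|g\|_{L^2}^2$ coming from the inversion formula of Definition \ref{wpt-def}, this lets me transfer every $L^2$ estimate to the phase-space side.

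For existence I would use Cook's method: since $\frac{d}{dt}\big(U(\tau,t)e^{-i(t-\tau)H_0}f\big)=iU(\tau,t)V(t)e^{-i(t-\tau)H_0}f$ and $U(\tau,t)$ is unitary, it suffices that $\int_\tau^{\pm\infty}\|V(t)e^{-i(t-\tau)H_0}f\|_{L^2}\,dt<\infty$ for $f$ in a dense set. I would take $f$ with $W_\Phi f\in C_c^\infty$ supported in $\{|\xi|\ge a_0\}\cap\{|x|\le R_0\}$. For such $f$ the transport identity places $W_{\Phi(t)}[e^{-i(t-\tau)H_0}f]$, up to rapidly decaying tails from the Schwartz window, in $\{|x-(t-\tau)\xi|\le R_0,\ |\xi|\ge a_0\}$, hence at position $|x|\gtrsim a_0|t-\tau|$; since $V$ feels only large $|y|$ there and $|V(t,y)|\le C(1+|y|)^{-\delta}$, one gets $\|V(t)e^{-i(t-\tau)H_0}f\|_{L^2}\le C(1+|t-\tau|)^{-\delta}$, which is integrable as $\delta>1$. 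Thus $W_\pm(\tau)$ exist and, as strong limits of unitaries, are isometries with closed range. The same tail estimate gives, for $u=W_\pm(\tau)f$, the comparison $\|U(t,\tau)u-e^{-i(t-\tau)H_0}f\|_{L^2}\le\int_t^{\pm\infty}\|V(s)e^{-i(s-\tau)H_0}f\|\,ds\to0$ as $t\to\pm\infty$.

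The inclusion $\mathcal{R}(W_\pm(\tau))\subseteq D_{scat}^{\pm,\Phi}(\tau)$ then follows quickly. For $u=W_\pm(\tau)f$ with $f$ as above, the comparison estimate together with the transport identity gives $W_{\Phi(t)}[U(t,\tau)u](x+(t-\tau)\xi,\xi)=e^{-i(t-\tau)|\xi|^2/2}W_\Phi f(x,\xi)+o(1)$ in $L^2(\mathbb{R}^{2n})$; choosing $a<a_0$ and $R>R_0$ makes $\chi_{\Gamma_{a,R}}$ annihilate the main term, so $u\in\tilde D_{scat}^{\pm,\Phi}(\tau)$. As such $u$ are dense in the closed subspace $\mathcal{R}(W_\pm(\tau))$ and $D_{scat}^{\pm,\Phi}(\tau)$ is closed, the inclusion follows.

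The reverse inclusion is the heart of the matter and the step I expect to be the main obstacle. It suffices to show $u\in\mathcal{R}(W_\pm(\tau))$ for $u\in\tilde D_{scat}^{\pm,\Phi}(\tau)$, i.e.\ that $f_t:=e^{i(t-\tau)H_0}U(t,\tau)u$ converges; its limit $f$ then satisfies $W_\pm(\tau)f=u$. By the transport identity, $W_\Phi f_t(x,\xi)=e^{i(t-\tau)|\xi|^2/2}W_{\Phi(t)}[U(t,\tau)u](x+(t-\tau)\xi,\xi)$, so the scattering hypothesis says exactly that $\chi_{\Gamma_{a,R}}W_\Phi f_t\to0$; equivalently, since $\frac{d}{dt}f_t=-ie^{i(t-\tau)H_0}V(t)U(t,\tau)u$, convergence is equivalent to $\int_\tau^{\pm\infty}\|V(t)U(t,\tau)u\|\,dt<\infty$. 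Splitting $V(t)$ at $|y|\sim\varepsilon|t-\tau|$, the outer part contributes $C(1+|t-\tau|)^{-\delta}\in L^1$, while the inner part is dominated by the mass of $W_{\Phi(t)}[U(t,\tau)u]$ in $\Gamma_{a,R}$. The genuine difficulty is that the hypothesis gives only that this mass tends to $0$, with no rate, so Cook's criterion does not apply directly. My plan is to bypass the rate by a weak-compactness argument: the relation $\chi_{\Gamma_{a,R}}W_\Phi f_t\to0$ localizes $f_t$ to bounded position, so that along any weakly convergent subsequence the limit is actually strong, and the limit is then pinned down uniquely through the already-established existence and injectivity of $W_\pm(\tau)$. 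The delicate point, on which the compactness of the localization $W_\Phi^{-1}(1-\chi_{\Gamma_{a,R}})W_\Phi$ hinges, is that $\Gamma_{a,R}$ does not control high momenta $|\xi|\to\infty$; closing the argument will require an additional uniform energy-localization of the orbit $U(t,\tau)u$. Finally, since $\mathcal{R}(W_\pm(\tau))$ does not depend on $\Phi$ while both inclusions hold for every $\Phi\in\mathcal{S}_{scat}$, the spaces $D_{scat}^{\pm,\Phi}(\tau)$ all coincide, giving the last assertion.
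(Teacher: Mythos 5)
Your existence proof (Cook's method on the dense set of states whose wave packet transform is supported in $\{|\xi|\ge a_0,\ |x|\le R_0\}$) and your proof of $\mathcal{R}(W_\pm(\tau))\subset D_{scat}^{\pm,\Phi}(\tau)$ are sound and essentially parallel to the paper's: the paper runs the corresponding estimate on the wave-packet side, through the remainder $R_{\varphi_0}$ and Proposition \ref{no2-term}, but the analytic content (nonstationary-phase decay of the free packet off its classical orbit, via Lemma \ref{kuroda-lem}, plus the short-range decay of $V$) is the same. The problem is the reverse inclusion, which you yourself flag as not closed, and it is precisely the heart of the theorem. Incidentally, Cook's criterion $\int\|V(t)U(t,\tau)u\|\,dt<\infty$ is sufficient, not equivalent, to convergence of $f_t=e^{i(t-\tau)H_0}U(t,\tau)u$; the paper never establishes (and does not need) such an integrability for the perturbed orbit.

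Your plan for $D_{scat}^{\pm,\Phi}(\tau)\subset\mathcal{R}(W_\pm(\tau))$ --- extract a weakly convergent subsequence of $f_t$ and upgrade to strong convergence using the localization to $\Gamma^c_{a,R}$ --- cannot work as stated, because $\Gamma^c_{a,R}=\{|\xi|>a,\ |x|<R\}$ is unbounded in $\xi$: the operator $W_\Phi^{-1}\chi_{\Gamma^c_{a,R}}W_\Phi$ is not compact (packets at fixed position with momenta tending to infinity stay inside $\Gamma^c_{a,R}$ and converge weakly but not strongly to $0$). The uniform energy localization of the orbit that you would need to repair this is not available under Assumption (A): the potential is time-dependent, so energy is not conserved, and no derivative bounds on $V$ are assumed, so $\|H_0^{1/2}U(t,\tau)u\|$ cannot be controlled. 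The paper closes this step by a rate-free duality device, which is the idea you are missing: prove the Cauchy property in the weak form $\sup_{\|\psi\|\le 1}\left|\left(f_t-f_{t'},\psi\right)\right|\to 0$, and insert $\chi_\Gamma+\chi_{\Gamma^c}$ on the $u$-side of the phase-space pairing. The $\Gamma$-piece is bounded by $2\|\chi_\Gamma W_{\Phi(t)}[U(t,\tau)u](x+(t-\tau)\xi,\xi)\|\,\|\varphi_0\|\,\|\psi\|$, which tends to $0$ uniformly in $t'\ge t$ and in $\psi$ by the defining property of $\tilde{D}_{scat}^{\pm,\Phi}(\tau)$ --- no rate needed. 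For the $\Gamma^c$-piece, the evolution difference is moved onto the test function via the integral equation (\ref{siki-psi2}): $W_{\varphi(t)}[e^{-i(t-\tau)H_0}\psi]-W_{\varphi(t)}[U(t,t')e^{-i(t'-\tau)H_0}\psi]$ is exactly an integral over $s\in[t,t']$ of $R_{\varphi_0}(s,\cdot\,;e^{-i(t'-\tau)H_0}\psi)$, and Proposition \ref{no2-term} bounds its restriction to $\Gamma^c$ by $C\langle t\rangle^{1-\delta}\|u_0\|\|\varphi_0\|\|\psi\|$, uniformly in $t'$ and independently of any rate for $u$. Hence $f_t$ is Cauchy, its limit $f$ satisfies $W_\pm(\tau)f=u$, and the inclusion follows. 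In short, the integrable decay is harvested from the free side (the test function $\psi$), not from the orbit of $u$; that is what makes the no-rate hypothesis sufficient, and it is the step your compactness plan cannot replace.
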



We use the following notations throughout the paper.
$i=\sqrt{-1},\,n\in\mathbb{N}$. We write $\partial_{x_j}=\partial/\partial_{x_j},\, \partial_t=\frac{\partial}{\partial t},
\,L^2=L^2(\mathbb{R}^{n}),\,\|\cdot\|=\|\cdot\|_{L^2},\,(\cdot,\cdot)=(\cdot,\cdot)_{L^2}$,
$\langle t\rangle=1+|t|$, $\|f\|_{H^l}=\sum_{|\alpha+\beta|=l}\|x^\beta\partial^\alpha_x f\|.$
$\mathcal{F}$ is the Fourier transform:
\[
\mathcal{F}f(\xi)=\hat{f}(\xi)=\int_{\mathbb{R}^{n}} e^{-ix\cdot\xi}f(x)dx.
\]
We also write $\mathcal{F}^{-1}f(x)=(2\pi)^{-n}\int e^{ix\cdot\xi}f(\xi)d\xi$,
and $\chi_A(x)$ denotes the characteristic function of a measurable set $A$.
For an operator $T$, $D(T)$ and $\mathcal{R}(T)$ denote the domain and range of $T$.
$\mathcal{H}_{p}(T)$ denotes pure point subspace of a self-adjoint operator $T$.

We can treat both $x$ and $\xi$ in the phase space at the same time by introducing the wave packet transform.
In order to prove asymptotic completeness of the wave operators,
the phase space decomposition into near the classic orbits of the particles or not
requires complicated arguments.
Specifically, for the phase space decomposition operators corresponding to $P_+$, $P_-$, we have to prove $P_-U(t,0)\rightarrow0$
and $P_+U(t,0)\sim P_+e^{-itH_0}$.
Enss method and other studies (\cite{Enss}, \cite{kuroda2}, \cite{perry}, \cite{KY}) define the phase space decomposition operators as a pseudo-differential operator.
But our method by the wave packet transform can treat both $x$ and $\xi$ in the phase space at the same time,
we only have to the multiplication operators of the characteristic functions.
Thus we can decompose the phase space by only using the density argument.


\begin{rem}\label{rem-asu}
We see that the condition $V\in L^\infty(\mathbb{R}^{n+1})$ and that
there exists $h\in L^1([0,\infty);dR)$ such that
\[
\|\chi_{|x|>R}V(t,x)\|_{L^\infty(\mathbb{R}^{n+1})}\leq h(R)
\]
can replace Assumption (A), (iii).
\end{rem}

\begin{rem}\label{rem-eigen}
If $V(t,x)=V(x)$(, $H(t)=H$) is independent of $t$, we see that $U(t,\tau)=e^{-i(t-\tau)H}$.
Then we obtain $D_{scat}^{\pm}(\tau)\perp\mathcal{H}_{p}(H)$ for all $\tau\in\mathbb{R}$.
\end{rem}

\begin{rem}\label{rem-ana}
In the case $n\geq2$, we obtain an another characterization of $\mathcal{R}(W_\pm(\tau))$.

Let $a\geq0,\,0<\sigma\leq1$ and $\Phi\in\mathcal{S}_{scat}$.
We define $\tilde{\Gamma}_{a,\sigma}$ by
\[
\tilde{\Gamma}_{a,\sigma}=\left\{(x,\xi)\in\mathbb{R}^n\times\mathbb{R}^n\Big\vert|\xi|\leq a\mbox{ or }|\cos\theta(x,\xi)|\geq\sigma\right\}
\]
where $\cos\theta(x,\xi)=(x\cdot\xi)/|x||\xi|$,
and $A_{scat}^{\pm,\Phi}(\tau)$ is the closure of the set of all functions in $L^2$ satisfying
that there exist $a>0$ and $\sigma\in(0,1)$ such that
\[
\chi_{\tilde{\Gamma}_{a,\sigma}}W_{\Phi(t)}[U(t,\tau)f](x+(t-\tau)\xi,\xi)\rightarrow 0 \,(t\rightarrow\pm\infty)\quad\mbox{in } L^2(\mathbb{R}^{n}\times\mathbb{R}^{n}).
\]

Then we get for any $\Phi\in\mathcal{S}_{scat}$
\begin{align}
\label{AeqD}
A_{scat}^{\pm,\Phi}(\tau)=\mathcal{R}(W_\pm(\tau))=D_{scat}^{\pm}(\tau).
\end{align}
\end{rem}
The plan of this paper is as follows.
In section 2, we mention the properties of the wave packet transform.
In section 3, we give the proof of the existence of the wave operators.
In section 4, we give the proof of the characterization of the ranges of the wave operators.
In section 5, we prove the remarks.
%
%
\section{Wave packet transform}
In this section, we introduce the properties of the wave packet transform.

\begin{Prop}\label{wpt-prop}
Let $\varphi\in\mathcal{S}\setminus\left\{ 0\right\}$ and $f\in\mathcal{S'}$.

Then the wave packet transform $W_\varphi f(x,\xi)$ has 
the following properties:\\
(i) $W_\varphi f(x,\xi)\in C^\infty(\mathbb{R}_x^{n}\times\mathbb{R}_\xi^{n})$.\\
(ii) If $u,\varphi$ depend on $t$, we have
\begin{align*}
W_{\varphi(t)}[\partial_xf](t,x,\xi)
&=-i\xi W_{\varphi(t)}[f](t,x,\xi)+W_{\partial_x\varphi(t)}[f](t,x,\xi), \mbox{ in }\mathcal{S'},\\
W_{\varphi(t)}[\partial_tf](t,x,\xi)
&=\partial_tW_{\varphi(t)}[f](t,x,\xi)-W_{\partial_t\varphi(t)}[f](t,x,\xi), \mbox{ in }\mathcal{S'}.
\end{align*}
(iii) If $f,g\in L^2$, $\psi\in\mathcal{S}\setminus\left\{ 0\right\}$,
\begin{align*}
(W_\varphi f,W_\psi g)_{L^2(\mathbb{R}_x^{n}\times\mathbb{R}_\xi^{n})}
&=\overline{(\varphi,\psi )}( f,g )=(\psi,\varphi)( f,g ).
\end{align*}
(iv) The inversion formula
$
W_\varphi^{-1}[W_\varphi f]=f
$
holds.
\end{Prop}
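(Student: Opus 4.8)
The four assertions all follow by unwinding Definition \ref{wpt-def}, so my plan is to dispatch (i) and (ii) by direct differentiation and integration by parts, to treat the Plancherel-type identity (iii) as the genuine computation, and then to obtain the inversion formula (iv) as an immediate corollary of (iii).

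For (i), I would observe that for each fixed $(x,\xi)$ the function $\phi_{x,\xi}(y):=\overline{\varphi(y-x)}\,e^{-iy\xi}$ lies in $\mathcal{S}$ because $\varphi\in\mathcal{S}$, and that $W_\varphi f(x,\xi)$ is precisely the duality pairing of $f\in\mathcal{S}'$ with $\phi_{x,\xi}$. The map $(x,\xi)\mapsto\phi_{x,\xi}$ is smooth into $\mathcal{S}$ with its Fréchet topology, since every Schwartz seminorm of $\phi_{x,\xi}$ and of its difference quotients is bounded by seminorms of $\varphi$ uniformly on compact sets; as $f$ is continuous and linear on $\mathcal{S}$, this forces $W_\varphi f\in C^\infty$ and permits differentiation under the pairing. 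For (ii), I would work in $\mathcal{S}'$: the spatial identity comes from integrating by parts in $y$, using $\partial_y\overline{\varphi(y-x)}=\overline{(\partial_x\varphi)(y-x)}$ and $\partial_y e^{-iy\xi}=-i\xi e^{-iy\xi}$, so that the two resulting terms assemble into the stated combination of $W_\varphi f$ and $W_{\partial_x\varphi}f$; the temporal identity comes from differentiating under the integral and applying the product rule to $\overline{\varphi(t,y-x)}f(t,y)$, which splits $\partial_t W_{\varphi(t)}f$ into $W_{\varphi(t)}[\partial_t f]$ and $W_{\partial_t\varphi(t)}f$.

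The heart of the matter is (iii). Fixing $x$, I would view $W_\varphi f(x,\cdot)$ and $W_\psi g(x,\cdot)$ as the $y\mapsto\xi$ Fourier transforms of $\overline{\varphi(\cdot-x)}f$ and $\overline{\psi(\cdot-x)}g$. Applying Plancherel's theorem in $\xi$ turns the inner product over $\xi$ into $\int_{\mathbb{R}^n}\varphi(y-x)\overline{\psi(y-x)}\,\overline{f(y)}g(y)\,dy$, up to the constant dictated by the paper's Fourier convention. Because $\varphi,\psi\in\mathcal{S}\subset L^2$ and $f,g\in L^2$, the integrand is absolutely integrable on $\mathbb{R}^{2n}$, so Fubini lets me integrate first in $x$; the change of variables $z=y-x$ then factors the double integral as $\big(\int\varphi(z)\overline{\psi(z)}\,dz\big)\big(\int\overline{f(y)}g(y)\,dy\big)=(\psi,\varphi)(f,g)=\overline{(\varphi,\psi)}(f,g)$.

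Finally, (iv) follows from (iii) without a separate delta-function argument. A one-line computation identifies $W_\varphi^{-1}=\big((2\pi)^n\|\varphi\|^2\big)^{-1}W_\varphi^{*}$, where $W_\varphi^{*}$ is the adjoint of $W_\varphi\colon L^2\to L^2(\mathbb{R}^{2n})$; hence for arbitrary $g$ one has $(W_\varphi^{-1}W_\varphi f,g)=\big((2\pi)^n\|\varphi\|^2\big)^{-1}(W_\varphi f,W_\varphi g)$, and (iii) with $\psi=\varphi$ evaluates the right-hand side to exactly $(f,g)$, giving $W_\varphi^{-1}W_\varphi f=f$. I expect no deep obstacle here; the delicate points are purely technical, namely justifying the interchange of integrations by Fubini and the distributional manipulations in (i)--(ii), and carefully tracking the $(2\pi)^n$ arising from Plancherel so that (iii) and (iv) are mutually consistent --- indeed that constant is exactly what makes $W_\varphi^{-1}$ a genuine left inverse.
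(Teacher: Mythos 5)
The paper gives no proof of Proposition \ref{wpt-prop} at all --- it simply cites \cite{KKS2} --- so a self-contained verification is the right thing to supply, and your overall route is the standard one: the duality/smoothness argument for (i), integration by parts for (ii), Plancherel plus Fubini for (iii), and the identification $W_\varphi^{-1}=\bigl((2\pi)^n\|\varphi\|^2\bigr)^{-1}W_\varphi^{*}$ for (iv). Part (i) and the temporal identity in (ii) are correct as you describe them.

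There are, however, concrete defects in the execution. The most serious is the $(2\pi)^n$: with the paper's convention $\hat f(\xi)=\int e^{-ix\xi}f(x)\,dx$ and the plain measure $dx\,d\xi$ on phase space, Plancherel in $\xi$ gives $(W_\varphi f,W_\psi g)_{L^2(\mathbb{R}^{2n})}=(2\pi)^n(\psi,\varphi)(f,g)$, not the constant-free identity in the statement (the statement is consistent only with the normalized measure $(2\pi)^{-n}dx\,d\xi$). Your write-up hedges in (iii) (``up to the constant'') but then in (iv) asserts that (iii) with $\psi=\varphi$ evaluates $\bigl((2\pi)^n\|\varphi\|^2\bigr)^{-1}(W_\varphi f,W_\varphi g)$ to ``exactly $(f,g)$'' --- that step needs the $(2\pi)^n$-corrected version of (iii), so as written your proofs of (iii) and (iv) contradict each other. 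You must commit to one convention: prove (iii) with the factor $(2\pi)^n$ (Lebesgue measure), after which (iv) follows exactly as you say. There are also conjugation slips in (iii): Plancherel applied to $(W_\varphi f,W_\psi g)$ yields the integrand $\overline{\varphi(y-x)}\,\psi(y-x)\,f(y)\,\overline{g(y)}$, the complex conjugate of the one you display, and your final chain $\bigl(\int\varphi\overline{\psi}\,dz\bigr)\bigl(\int\overline{f}g\,dy\bigr)=(\psi,\varphi)(f,g)$ is false as written, since the left side equals $(\varphi,\psi)(g,f)=\overline{(\psi,\varphi)(f,g)}$; the two slips happen to cancel, but each intermediate equality is wrong.

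Two further points. In (ii) you assert that integration by parts ``assembles into the stated combination,'' but carrying it out actually gives $W_{\varphi(t)}[\partial_x f]=i\xi\,W_{\varphi(t)}[f]-W_{\partial_x\varphi(t)}[f]$, the \emph{negative} of the proposition's right-hand side; your (correct) method cannot produce the stated signs, so you should flag the discrepancy rather than paper over it. (This is harmless downstream: the paper only uses the second-order identity for $\Delta$ in Section 2, where the overall sign cancels, and that computation agrees with the signs above.) Finally, the standing hypothesis of the proposition is $f\in\mathcal{S}'$, while your derivation of (iv) from (iii) pairs $W_\varphi f$ against $W_\varphi g$ in $L^2(\mathbb{R}^{2n})$ and therefore only covers $f\in L^2$; for general tempered $f$ the inversion formula requires a separate argument, e.g.\ interpreting $W_\varphi^{-1}[W_\varphi f]$ weakly against test functions $g\in\mathcal{S}$ and using the smoothness and polynomial bounds of $W_\varphi f$ from (i).
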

\begin{proof}
See \cite{KKS2}.
\end{proof}
In this section, let $\varphi_0\in\mathcal{S}\setminus\left\{ 0\right\}$ and $u_0\in L^2$.
The idea of considering the wave packet transform with a time-dependent wave packet is quoted from \cite{KMS}.

We transform $(\ref{siki-1-nonini})$ with initial data:
\begin{align}
\label{siki-1}
&\begin{cases}
i\partial_tu+\frac{1}{2}\Delta u -V(t,x)u = 0, \quad (t,x)\in\mathbb{R}\times\mathbb{R}^{n},\\
u(t_0)=u_0,\hspace{90pt} x\in\mathbb{R}^n
\end{cases}\end{align}
via the wave packet transform with a wave packet $\varphi(t)=e^{-itH_0}\varphi_0$.
From Proposition \ref{wpt-prop}, we have
\begin{align*}
W_{\varphi(t)}[\Delta u](t,x,\xi)
&=\dint\overline{\varphi(t,y-x)}\Delta u(y)e^{-iy\xi}dy\\
&=\dint\Delta \overline{\varphi(t,y-x)}u(y)e^{-iy\xi}dy\\
&+\dint(-2i\xi\cdot\nabla_y)\overline{\varphi(t,y-x)}u(y)e^{-iy\xi}dy
-|\xi|^2\dint\overline{\varphi(t,y-x)}u(y)e^{-iy\xi}dy\\
&=W_{\Delta\varphi(t)}u(t,x,\xi)+2i\xi\cdot\nabla_xW_{\varphi(t)}u(t,x,\xi)
-|\xi|^2W_{\varphi(t)}u(t,x,\xi).
\end{align*}
Since $W_{\varphi(t)}[i\partial_tu](t,x,\xi)=i\partial_tW_{\varphi(t)}u(t,x,\xi)
+W_{i\partial_t\varphi(t)}u(t,x,\xi)$, $(\ref{siki-1})$ is transformed to
\begin{align}
\label{siki-2}
&\begin{cases}
\Big(i\partial_t+i\xi\cdot\nabla_x-\frac{1}{2}|\xi|^2\Big)W_{\varphi(t)}u(t,x,\xi) = R_{\varphi_0}(t,x,\xi;u_0),\\
W_{\varphi(t_0)}u(t_0,x,\xi)=W_{\varphi_0}u_0(x,\xi),
\end{cases}
\end{align}
where
\[
R_{\varphi_0}(t,x,\xi;u_0)=\int\overline{e^{-isH_0}\varphi_0(y-x)}V(t,y)U(t,0)u_0(y)e^{-iy\xi}dy.
\]

Regarding $R_{\varphi_0}(t,x,\xi;u_0)$ of $(\ref{siki-2})$ as an inhomogeneous term, we have
\begin{align*}
W_{\varphi(t)}u(t,x,\xi)=&e^{-i\frac{1}{2}(t-t_0) |\xi|^2}
W_{\varphi_0}u_0(x-(t-t_0)\xi,\xi)\\
&\hspace{10pt}-i\int_{t_0}^t e^{-i\frac{1}{2}(t-s) |\xi|^2}
R_{\varphi_0}(s,x-(t-s)\xi,\xi;u_0)ds.
\end{align*}
This shows that the wave packet transform enables us to present the propagator with the integral equation.
Therefore for $t,t'>0$ and $\psi\in L^2$, we obtain the following calculations:
\begin{align}
\label{siki-psi}
W_{\varphi(t)}&[e^{-itH_0}\psi](x,\xi)=e^{-i\frac{1}{2}t|\xi|^2}W_{\varphi_0}\psi(x-t\xi,\xi),
\end{align}
\begin{align}
\label{siki-psi2-e}
W_{\varphi_0}&[e^{itH_0}U(t,0)\psi](x,\xi)\nonumber\\
&=e^{i\frac{1}{2}t|\xi|^2}W_{\varphi(t)}[U(t,0)\psi](x+t\xi,\xi)\nonumber\\
&=e^{i\frac{1}{2}t|\xi|^2}\cdot \Big(e^{-\frac{1}{2}it|\xi|^2}W_{\varphi_0}\psi((x+t\xi)-t\xi,\xi)\\
&\hspace{40mm}-i\int_0^t e^{-i\frac{1}{2}(t-s) |\xi|^2}
R_{\varphi_0}(s,((x+t\xi)-(t-s)\xi;\psi)ds\Big)\nonumber\\
&=W_{\varphi_0}\psi(x,\xi)-i\int_0^t e^{i\frac{1}{2}s |\xi|^2}R_{\varphi_0}(s,x+s\xi,\xi;\psi)ds,\nonumber
\end{align}
\begin{align}
\label{siki-psi2}
W_{\varphi(t)}&[U(t,t')e^{-it'H_0}\psi](x,\xi)\nonumber\\
&=e^{-\frac{1}{2}i(t-t')|\xi|^2}W_{\varphi(t')}[e^{-it'H_0}\psi](x-(t-t')\xi,\xi)\\
&\hspace{40mm}+i\int_t^{t'} e^{-i\frac{1}{2}(t-s) |\xi|^2}
R_{\varphi_0}(s,x+(t-s)\xi,\xi;e^{-it'H_0}\psi)ds\nonumber\\
&=W_{\varphi_0}\psi(x-t\xi,\xi)+i\int_t^{t'} e^{-i\frac{1}{2}(t-s) |\xi|^2}
R_{\varphi_0}(s,x+(t-s)\xi,\xi;e^{-it'H_0}\psi)ds.\nonumber
\end{align}

%
%
\section{Existence of the wave operators}
In this section, we prove the existence of the wave operators by the wave packet transform in the previous section.

Let $0<\rho<1$.
We define $V_\rho$ as follows:
\begin{align*}
V_\rho(t,x)=\chi_0(\frac{1}{\rho \langle t\rangle}x)V(t,x),
\end{align*}
where $\chi_0\in C^\infty(\mathbb{R}^n)$ satisfies $\chi_0(x)=1\,(|x|\geq1),\,\chi_0(x)=0\,(|x|\leq\frac{1}{2})$.
Then there exists $C>0$ such that
\begin{align}
\label{v-2}
|V_\rho(t,x)|\leq C\langle t\rangle^{-\delta},
\end{align}
for any $t\in\mathbb{R}$, $x\in\mathbb{R}^n$. 
When $|x|\geq\rho \langle t\rangle$,
\begin{align}
\label{v-1}
V_\rho(t,x)=V(t,x).
\end{align}

The following well-known lemma is used in the proof of Proposition \ref{no2-term}.
\begin{Lem}\label{kuroda-lem}
Let $f\in\mathcal{S}$.
If supp$\hat{f}\subset K$ with some compact set $K$ which does not contain the origin, for an open set $K'\supset 2K$ and $l,k\geq0$ there exists $C=C_{K,K',l,k}$ such that
\begin{align*}
|e^{-itH_0}f(x)|\leq C\langle t\rangle^{-k}\langle x\rangle^{-l}\|f\|_{H^{k+l}}\quad(x/t\notin K',t\neq 0).
\end{align*}
\end{Lem}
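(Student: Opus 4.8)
The plan is to prove this as a standard non-stationary phase (repeated integration by parts) estimate for the oscillatory integral representing the free propagator. Writing
\[
e^{-itH_0}f(x)=(2\pi)^{-n}\int_{K}e^{i\psi(x,\xi,t)}\hat f(\xi)\,d\xi,\qquad \psi=x\cdot\xi-\tfrac{t}{2}|\xi|^2,
\]
the gradient $\nabla_\xi\psi=x-t\xi$ vanishes only at the stationary point $\xi=x/t$, which is the classical velocity of the free flow. Since $\operatorname{supp}\hat f\subset K$ and $x/t\notin K'$, the integration runs over a region on which $\nabla_\xi\psi$ is bounded away from $0$, so I would introduce the first order operator $L=\dfrac{(x-t\xi)\cdot\nabla_\xi}{i|x-t\xi|^2}$, which satisfies $L\,e^{i\psi}=e^{i\psi}$, and integrate by parts $N=k+l$ times, transferring $({}^{t}L)^N$ onto $\hat f$.

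The heart of the matter is a quantitative lower bound for $|x-t\xi|$ on $\operatorname{supp}\hat f$. Here I use $0\notin K$, so that $0<m\le|\xi|\le M<\infty$ for $\xi\in K$, together with the hypothesis $x/t\notin K'$ (an open set containing $2K$), which guarantees $\operatorname{dist}(x/t,K)\ge\kappa$ for some $\kappa>0$ and controls the size of $x/t$. Writing $v:=x/t$ and splitting according to whether $|v|$ is large or bounded, I would show that for $|t|\ge1$ there is $c>0$ with
\[
|x-t\xi|\ge c\bigl(\langle t\rangle+\langle x\rangle\bigr),\qquad \xi\in K,\ x/t\notin K'.
\]
Indeed, for $|v|\ge 2M$ one has $|x-t\xi|\ge\tfrac12|x|$ and $|x-t\xi|\ge m|t|$, while for bounded $|v|$ one has $|x-t\xi|\ge\kappa|t|$, and $|x|\le M|t|$ then converts this into an $\langle x\rangle$ bound. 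The range $|t|\le1$ I would treat separately: there $\langle t\rangle^{-k}$ is harmless, and $|x-t\xi|\ge|x|-M$ yields the $\langle x\rangle^{-l}$ decay once $|x|\ge 2M$, the region $|x|\le 2M$ being trivial. This separation estimate, uniform across all regimes of $(x,t)$, is the step I expect to require the most care.

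Granting the bound, the integration by parts is routine. Each application of ${}^{t}L$ produces either a $\xi$-derivative falling on $\hat f$ with a factor $|x-t\xi|^{-1}$, or a coefficient of size $O(|t|\,|x-t\xi|^{-2})$; using $|t|\lesssim|x-t\xi|$ from the lower bound, every step gains exactly one power of $|x-t\xi|^{-1}$ and at most one $\xi$-derivative of $\hat f$, so that
\[
|e^{-itH_0}f(x)|\le C\int_K\frac{\sum_{|\gamma|\le N}|\partial_\xi^\gamma\hat f(\xi)|}{|x-t\xi|^{N}}\,d\xi,\qquad N=k+l.
\]
Inserting $|x-t\xi|^{-N}\le c^{-N}\langle t\rangle^{-k}\langle x\rangle^{-l}$ and estimating, by Cauchy--Schwarz and Plancherel, $\int_K|\partial_\xi^\gamma\hat f|\,d\xi\le|K|^{1/2}\|\partial_\xi^\gamma\hat f\|=|K|^{1/2}\,(2\pi)^{n/2}\|x^\gamma f\|$, which the support condition $\operatorname{supp}\hat f\subset K$ with $0\notin K$ allows one to control by $\|f\|_{H^{k+l}}$ (trading a factor $|\xi|^{-2(k+l-|\gamma|)}$, bounded on $K$, for the missing weights), one obtains the asserted estimate with $C=C_{K,K',l,k}$.
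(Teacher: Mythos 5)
The paper itself gives no proof of this lemma --- its ``proof'' is just the citation to Kuroda's textbook \cite{kuroda2} --- so supplying the standard non-stationary phase argument, as you do, is exactly the right move: represent $e^{-itH_0}f$ as an oscillatory integral over $K$, integrate by parts $k+l$ times with $L=\frac{(x-t\xi)\cdot\nabla_\xi}{i|x-t\xi|^2}$, and convert the accumulated power $|x-t\xi|^{-(k+l)}$ into the weights $\langle t\rangle^{-k}\langle x\rangle^{-l}$. Your bookkeeping is sound: each application of ${}^{t}L$ costs one power of $|x-t\xi|^{-1}$ because the coefficient terms are $O(|t|\,|x-t\xi|^{-2})$ and $|t|\lesssim|x-t\xi|$, and the condition $0\notin K$ (so $|\xi|\ge m>0$ on $K$) is what lets you trade bounded powers of $|\xi|^{-1}$ to reach the exact-order norm $\|f\|_{H^{k+l}}$ at the end.

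There is, however, one genuine gap, and it sits precisely at the step you call ``the heart of the matter.'' From the hypothesis $x/t\notin K'$ with $K'$ an open set containing $2K$, you infer $\operatorname{dist}(x/t,K)\ge\kappa>0$. This inference is false: $2K=\{2\xi:\xi\in K\}$ need not contain $K$, so $x/t$ can avoid $K'$ while lying inside $K$ itself. Concretely, take $K=\{1/2\le|\xi|\le1\}$, so $2K=\{1\le|\xi|\le2\}$, and let $K'$ be a thin neighborhood of $2K$; any $x/t\in K$ with $|x/t|=3/4$ then satisfies the hypotheses, yet it is a stationary point of your phase $\psi=x\cdot\xi-\tfrac{t}{2}|\xi|^2$, where stationary phase gives decay no better than $|t|^{-n/2}$ --- so both your separation bound $|x-t\xi|\ge c(\langle t\rangle+\langle x\rangle)$ and the lemma's conclusion fail there. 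The mismatch is a clash of conventions: the dilation factor $2$ in ``$K'\supset 2K$'' belongs to the convention $H_0=-\Delta$ (phase $x\cdot\xi-t|\xi|^2$, stationary point $x/t=2\xi\in 2K$), which is the form of the result in Kuroda's book, whereas you --- correctly following this paper --- use $H_0=-\tfrac12\Delta$, for which the stationary point is $x/t=\xi\in K$ and the hypothesis must read $K'\supset K$. Your proof becomes correct once the hypothesis and the phase are made compatible: either replace $2K$ by $K$ in the statement, or keep $2K$ and run your argument with the phase $x\cdot\xi-t|\xi|^2$. (As stated, the lemma itself is false under the paper's convention; note that nothing downstream is damaged, since in the paper's only application, Proposition \ref{no2-term}, the relevant region $|y-x|>2\rho s$ with $r\le\rho$ keeps $(y-x)/s$ away from $K$ and $2K$ alike.)
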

\begin{proof}
See \cite{kuroda2}.
\end{proof}
Using the above lemma, we obtain the following lemma.
\begin{Prop}\label{no2-term}
Let $a>0$ and $R>0$. There exist $r_0>0$ and $T>0$ such that for all $r\in (0,r_0]$
\begin{align*}
\big\|R_{\varphi_0}(s,x+s\xi,\xi;\psi)\big\|_{L^2(\mathbb{R}^{2n}\setminus\Gamma_{a,R})}\leq C\langle s\rangle^{-\delta}\|\psi\|
\end{align*}
for some $C=C_{a,R,r}>0$ and for any $s\geq T$ where $\varphi_0\in\mathcal{S}\setminus\{0\}$ satisfying supp$\hat{\varphi_0}\subset\{r/2<|\xi|<r\}$.
\end{Prop}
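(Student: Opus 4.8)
The plan is to recognize $R_{\varphi_0}(s,x+s\xi,\xi;\psi)$ as a wave packet transform evaluated at a shifted and restricted set of phase-space points. Writing $\varphi(s)=e^{-isH_0}\varphi_0$ and $g_s(y)=V(s,y)\,U(s,0)\psi(y)$, Definition \ref{wpt-def} gives $R_{\varphi_0}(s,x,\xi;\psi)=W_{\varphi(s)}[g_s](x,\xi)$, and since $\mathbb{R}^{2n}\setminus\Gamma_{a,R}=\{|\xi|>a,\ |x|<R\}$, the quantity to estimate is the $L^2$-norm of $W_{\varphi(s)}[g_s](x+s\xi,\xi)$ over this set. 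First I would fix $\beta\in(0,a/4)$ and split $g_s=\chi_{\{|y|\ge\beta s\}}g_s+\chi_{\{|y|<\beta s\}}g_s$; the two pieces are controlled by two complementary mechanisms.

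For the far piece $\chi_{\{|y|\ge\beta s\}}g_s$ I would discard both the restriction and the shift: for each fixed $\xi$ the map $x\mapsto x-s\xi$ is measure preserving, so the $L^2(\mathbb{R}^{2n}\setminus\Gamma_{a,R})$-norm of $W_{\varphi(s)}[\chi_{\{|y|\ge\beta s\}}g_s](x+s\xi,\xi)$ is bounded by its full $L^2(\mathbb{R}^{2n})$-norm in the unshifted variables. Proposition \ref{wpt-prop}(iii) then yields $\|\varphi_0\|\,\|\chi_{\{|y|\ge\beta s\}}g_s\|$, and since Assumption (A) gives $\sup_{|y|\ge\beta s}|V(s,y)|\le C(1+\beta s)^{-\delta}\le C\langle s\rangle^{-\delta}$ while $\|U(s,0)\psi\|=\|\psi\|$, this piece is $\le C\langle s\rangle^{-\delta}\|\psi\|$ for $s\ge T$. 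Here the short-range decay of $V$ does all the work.

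For the near piece $\chi_{\{|y|<\beta s\}}g_s$ the potential is no longer small, so I would instead exploit the dispersive decay of the free wave packet. After the change of variables $x'=x+s\xi$, the relevant region becomes $\{|\xi|>a,\ |x'-s\xi|<R,\ |y|<\beta s\}$, on which $|x'|>s|\xi|-R>sa-R$ and hence $|(y-x')/s|>a-\beta-R/s>a/2$ once $s\ge T$. Choosing $r_0$ so small that $3r_0<a/2$ and setting $K'=\{r/2<|\eta|<3r\}\supset 2K$ (where $K=\mathrm{supp}\,\hat\varphi_0\subset\overline{\{r/2<|\eta|<r\}}$ is compact and excludes the origin), the point $(y-x')/s$ lies outside $K'$, so Lemma \ref{kuroda-lem} applies and gives $|(e^{-isH_0}\varphi_0)(y-x')|\le C\langle s\rangle^{-k}\langle y-x'\rangle^{-l}\|\varphi_0\|_{H^{k+l}}$ for any $k,l\ge0$. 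Substituting this bound, using $|V|\le C$, applying Cauchy--Schwarz in $y$ together with Fubini, and using that for fixed $y$ the set $\{\xi:|\xi|>a,\ |x'-s\xi|<R\}$ has volume $\lesssim s^{-n}$ while $\langle y-x'\rangle\gtrsim s$ on the region, I would obtain (taking $l>n$) a bound of the form $\le C\langle s\rangle^{-k}\langle s\rangle^{-l/2}\|\psi\|$; choosing $k\ge\delta$ closes this piece.

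The main obstacle is the unboundedness of the frequency region $\{|\xi|>a\}$: any attempt to bound the near piece by a pointwise-in-$(x,\xi)$ estimate fails because the $\xi$-integral then diverges. The resolution is precisely the $\xi$-dependent shift $x\mapsto x+s\xi$ built into the statement, which forces the wave-packet center $x'\approx s\xi$ to have large magnitude $\gtrsim sa$ uniformly; this simultaneously places $(y-x')/s$ outside the forward cone $K'$ (so that Lemma \ref{kuroda-lem} can be invoked at all) and, through the confinement $|x'-s\xi|<R$, restricts the admissible $\xi$ to a ball of volume $\sim s^{-n}$, which combined with the spatial decay $\langle y-x'\rangle^{-l}\lesssim\langle s\rangle^{-l}$ renders the $\xi$-integral not merely finite but rapidly decaying. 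Verifying that $r_0$ can indeed be chosen (in terms of $a$) so that the cone condition of Lemma \ref{kuroda-lem} holds throughout the near region is the one genuinely delicate calibration in the argument.
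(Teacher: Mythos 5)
Your proposal is correct, and it rests on the same two mechanisms as the paper's own proof: smallness of $V$ far from the origin combined with the $L^2$-isometry of the wave packet transform (Proposition \ref{wpt-prop} (iii)) for one piece, and the dispersive estimate of Lemma \ref{kuroda-lem} for the other, with $r_0$ calibrated against $a$ so that $(y-x')/s$ falls outside an open neighborhood $K'$ of $2\,\mathrm{supp}\,\hat{\varphi}_0$. Where you differ is the decomposition, and this changes the technical shape of the dispersive step. The paper splits the $y$-integral by distance to the \emph{wave-packet center}, $|y-(x+s\xi)|\le 2\rho s$ versus $|y-(x+s\xi)|>2\rho s$: on the near piece the geometry of $\mathbb{R}^{2n}\setminus\Gamma_{a,R}$ forces $|y|\gtrsim s$ so the short-range bound applies, while on the far piece the cone condition of Lemma \ref{kuroda-lem} holds on the support of the integrand \emph{regardless} of where $(x,\xi)$ lies; consequently the paper can discard the phase-space restriction in both pieces, apply Plancherel in $\xi$, and reduce everything to elementary $L^2_{x,y}$ bounds via Fubini, with no volume counting. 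You split instead by distance of $y$ to the \emph{origin}, $|y|\gtrless\beta s$; then the cone condition for your near piece holds only on the restricted shifted region $\{|\xi|>a,\ |x|<R\}$, so Plancherel over all of $\mathbb{R}^{2n}$ is unavailable for that piece --- exactly the obstruction you identify --- and you must keep the restriction and combine the pointwise Kuroda bound with the measure $\sim s^{-n}$ of the $\xi$-sections and the decay $\langle y-x'\rangle^{-l}\lesssim\langle x'\rangle^{-l/2}\langle s\rangle^{-l/2}$ to make the $x'$-integral converge. That computation does close (with $l>n$ and $k\ge\delta$), so your proof is complete; it is simply heavier at this step, whereas the paper's choice of splitting is precisely what lets Plancherel dispose of the unbounded $\xi$-region in both pieces. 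Two smaller points in your favor: you bound $V$ on $\{|y|\ge\beta s\}$ directly by its supremum, avoiding the paper's auxiliary smooth truncation $V_\rho$ (which is never differentiated, hence unnecessary), and your calibration $3r_0<a/2$ with $K'=\{r/2<|\eta|<3r\}$ leaves genuine room between $2K$ and the excluded directions, whereas the paper's choice $r_0=\rho$ gives only the non-strict margin $|(y-x)/s|>2\rho\ge 2r$ when $r=r_0$.
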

\begin{proof}
For $c\in(0,a)$, we see that
\begin{align*}
|x\pm s\xi|
\geq as-R
\geq cs
\geq \frac12c\langle s\rangle
\end{align*}
for any $(x,\xi)\in\mathbb{R}^{2n}\setminus\Gamma_{a,R}$ and $s\geq T=\max\{R/(a-c),1\}$.

Taking $\rho=c/6$, we put
\begin{align*}
R_{\varphi_0}(s,x,\xi;\psi)
&=\int\overline{e^{-isH_0}\varphi_0(y-x)}V(s,y)U(s,0)\psi(y)e^{-i\xi y}dy\\
&\leq\int_{\{|y-x|\leq 2\rho s\}}\overline{e^{-isH_0}\varphi_0(y-x)}V(s,y)U(s,0)\psi(y)e^{-i\xi y}dy\nonumber\\
&+\int_{\{|y-x|> 2\rho s\}}\overline{e^{-isH_0}\varphi_0(y-x)}V(s,y)U(s,0)\psi(y)e^{-i\xi y}dy\nonumber\\
&= I_1(s,x,\xi;\psi)+I_2(s,x,\xi;\psi).
\end{align*}

By ($\ref{v-2}$), ($\ref{v-1}$), change of variables as $x'=x+s\xi$ and Plancherel theorem, we have
\begin{align*}
\big\|I_1&(s,x+s\xi,\xi;\psi)\big\|_{L^2(\mathbb{R}^{2n}\setminus\Gamma_{a,R})}\\
&=\|\int_{\{|y-(x+s\xi)|\leq 2\rho s\}}\overline{e^{-isH_0}\varphi_0(y-(x+s\xi))}V_\rho(s,y)U(s,0)\psi(y)e^{-i\xi y}dy\big\|_{L^2(\mathbb{R}^{2n}\setminus\Gamma_{a,R})}\\
&\leq\|\int_{\{|y-x'|\leq 2\rho s\}}\overline{e^{-isH_0}\varphi_0(y-x')}V_\rho(s,y)U(s,0)\psi(y)e^{-i\xi y}dy\big\|_{L^2(\mathbb{R}_{x'}^n\times\mathbb{R}_\xi^n)}\\
&=\|\chi_{\{|y-x|\leq 2\rho s\}}\overline{e^{-isH_0}\varphi_0(y-x)}V_\rho(s,y)U(s,0)\psi(y)\big\|_{L^2(\mathbb{R}_x^n\times\mathbb{R}_y^n)}\\
&=\|\overline{e^{-isH_0}\varphi_0(y-x)}V_\rho(s,y)U(s,0)\psi(y)\big\|_{L^2(\mathbb{R}_x^n\times\mathbb{R}_y^n)}\\
&\leq C\langle s\rangle^{-\delta}\|\psi\|
\end{align*}
for $s\geq T$.
By Lemma $\ref{kuroda-lem}$ and taking $r_0=\rho$, we have
\begin{align*}
\big\|I_2&(s,x+s\xi,\xi;\psi)\big\|_{L^2(\mathbb{R}^{2n}\setminus\Gamma_{a,R})}\\
&\leq\big\|\int_{\{|y-x'|> 2\rho s\}}\overline{e^{-isH_0}\varphi_0(y-x')}V(s,y)U(s,0)\psi(y)e^{-i\xi y}dy\big\|_{L^2(\mathbb{R}_{x'}^n\times\mathbb{R}_{\xi}^n)}\nonumber\\
&=\big\|\chi_{\{|y-x|> 2\rho s\}}\overline{e^{-isH_0}\varphi_0(y-x)}V(s,y)U(s,0)\psi(y)\big\|_{L^2(\mathbb{R}_x^n\times\mathbb{R}_y^n)}\nonumber\\
&\leq C\langle s\rangle^{-k}\|\hat{\varphi_{0}}\|_{H^{k+l}}\big\|\langle y-x\rangle^{-l}V(s,y)U(s,0)\psi(y)\big\|_{L^2(\mathbb{R}_x^n\times\mathbb{R}_y^n)}\nonumber\\
&= C\langle s\rangle^{-k}\|\hat{\varphi_{0}}\|_{H^{k+l}}\big\|\langle x\rangle^{-l}\big\|\big\|U(s,0)\psi(y)\big\|\nonumber\\
&\leq C\langle s\rangle^{-\delta}\|\psi\|\nonumber.
\end{align*}
\end{proof}

We give a proof of the existence of the wave operators $W_\pm(\tau)$.
The following lemma is well known and has been already proved, however, we give the proof using wave packet transform.

\begin{Prop}\label{PropExis}
Suppose that (A) be satisfied.
Then the wave operators
$W_{\pm}(\tau)$
exist for any $\tau\in\mathbb{R}$.
\end{Prop}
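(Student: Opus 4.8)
The plan is to prove the existence of $W_\pm(\tau)$ by verifying a Cauchy criterion on a dense subspace and extending by the uniform boundedness of the approximants, using the wave packet transform to rewrite the relevant norm as the full $L^2(\mathbb{R}^{2n})$-norm of the object $R_{\varphi_0}$ controlled by Proposition \ref{no2-term}.

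First I would reduce to $\tau=0$ and the upper sign. Writing $\sigma=t-\tau$ and $\tilde U(\sigma,\sigma')=U(\tau+\sigma,\tau+\sigma')$, the operator $\tilde U$ is the propagator generated by $H_0+V(\tau+\cdot,x)$, and the shifted potential again satisfies Assumption (A) with the same constants. Hence $W_\pm(\tau)=\mbox{s-}\lim_{\sigma\to\pm\infty}\tilde U(0,\sigma)e^{-i\sigma H_0}$, so it suffices to treat $\tau=0$, the case $t\to-\infty$ being identical. By unitarity of $U(0,t)e^{-itH_0}$, it is enough to show that $\{U(0,t)e^{-itH_0}\psi\}_t$ is Cauchy as $t\to+\infty$ for $\psi$ in a dense set. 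Using $U(0,t')=U(0,t)U(t,t')$ and unitarity, $\|U(0,t')e^{-it'H_0}\psi-U(0,t)e^{-itH_0}\psi\|=\|U(t,t')e^{-it'H_0}\psi-e^{-itH_0}\psi\|$, and the Duhamel identity $\partial_s U(t,s)=iU(t,s)H(s)$ gives $U(t,t')e^{-it'H_0}\psi-e^{-itH_0}\psi=i\int_t^{t'}U(t,s)V(s)e^{-isH_0}\psi\,ds$, so that $\|U(0,t')e^{-it'H_0}\psi-U(0,t)e^{-itH_0}\psi\|\le\int_t^{t'}\|V(s)e^{-isH_0}\psi\|\,ds$.

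The wave packet transform now enters to estimate the single-function norm $\|V(s)e^{-isH_0}\psi\|$. I would take the dense set of $\psi\in\mathcal{S}$ with $\hat\psi\in C_c^\infty(\mathbb{R}^n\setminus\{0\})$; decomposing the support of $\hat\psi$ into finitely many annuli $\{r/2<|\xi|<r\}$ and using linearity, I may assume $\hat\psi$ is supported in one such annulus and fix $\varphi_0\in\mathcal{S}\setminus\{0\}$ with $\hat\varphi_0$ supported in the same annulus. By Proposition \ref{wpt-prop}(iii) with $\varphi=e^{-isH_0}\varphi_0$ (so that $\|e^{-isH_0}\varphi_0\|=\|\varphi_0\|$), one has $\|V(s)e^{-isH_0}\psi\|=\|\varphi_0\|^{-1}\|W_{e^{-isH_0}\varphi_0}[V(s)e^{-isH_0}\psi]\|_{L^2(\mathbb{R}^{2n})}$, and the integrand on the right is precisely $R_{\varphi_0}(s,x,\xi;\psi)$ with the full evolution $U(s,0)\psi$ replaced by the free evolution $e^{-isH_0}\psi$; since its $L^2(\mathbb{R}^{2n})$-norm is invariant under the translation $x\mapsto x+s\xi$, the estimate of Proposition \ref{no2-term} applies verbatim, because its proof uses only $\|U(s,0)\psi\|=\|\psi\|$, which holds equally for $e^{-isH_0}\psi$. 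On $\mathbb{R}^{2n}\setminus\Gamma_{a,R}$ this yields the bound $C\langle s\rangle^{-\delta}\|\psi\|$, integrable in $s$ precisely because $\delta>1$.

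The main obstacle is the complementary phase-space region $\Gamma_{a,R}=\{|\xi|\le a\}\cup\{|x|\ge R\}$, which Proposition \ref{no2-term} does not cover. Here I would exploit that, since $\hat\varphi_0$ is supported away from the origin, the $\xi$-content of $R_{\varphi_0}$ concentrates in the annulus by a non-stationary-phase argument, so choosing $a<r/2$ renders the contribution of $\{|\xi|\le a\}$ rapidly decaying in $s$; on $\{|x|\ge R\}$ the dispersive decay of the free propagator off its classical region (Lemma \ref{kuroda-lem}), together with the short-range bound in Assumption (A), forces $V$ to be evaluated where it is small, giving again an integrable contribution. Summing the two regimes shows that $s\mapsto\|V(s)e^{-isH_0}\psi\|$ is in $L^1([T,\infty))$, whence the Cauchy criterion holds on the dense set; the uniform bound $\|U(0,t)e^{-itH_0}\|=1$ then extends the strong limit to all of $L^2$, proving existence of $W_+(0)$ and hence of $W_\pm(\tau)$ for every $\tau\in\mathbb{R}$.
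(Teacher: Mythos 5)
Your reduction to $\tau=0$, the Duhamel/Cook identity $\|U(0,t')e^{-it'H_0}\psi-U(0,t)e^{-itH_0}\psi\|\le\int_t^{t'}\|V(s)e^{-isH_0}\psi\|\,ds$, the Plancherel-type identity $\|V(s)e^{-isH_0}\psi\|=\|\varphi_0\|^{-1}\|W_{e^{-isH_0}\varphi_0}[V(s)e^{-isH_0}\psi]\|_{L^2(\mathbb{R}^{2n})}$, and the observation that Proposition \ref{no2-term} holds verbatim with $e^{-isH_0}\psi$ in place of $U(s,0)\psi$ (its proof only uses unitarity) are all correct. The genuine gap is exactly where you place ``the main obstacle'': the region $\Gamma_{a,R}$. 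Your argument for $\{|\xi|\le a\}$ cannot work, because Assumption (A) imposes no regularity on $V$ at all --- it is a bounded measurable function with pointwise decay --- so no non-stationary-phase/integration-by-parts in $y$ is available, and multiplication by such a $V$ destroys any frequency localization. Even for $V\equiv 1$ the claim fails: the $\xi$-support of $W_{e^{-isH_0}\varphi_0}[e^{-isH_0}\psi]$ lies in the sumset $(-\mathrm{supp}\,\hat{\varphi_0})+\mathrm{supp}\,\hat{\psi}$, which for two sets in the same annulus is not bounded away from the origin. Worse, your parameters are incompatible: to invoke Proposition \ref{no2-term} with $\hat{\varphi_0}$ at scale $r$ you need $r\le r_0=\rho=c/6<a/6$ (the wave packet must spread more slowly than the minimal group speed $a$), while your treatment of $\{|\xi|\le a\}$ requires $a<r/2$; together these give $r<r/12$. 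With the admissible choice $a>6r$, the whole (free) frequency content $\{|\xi|<2r\}$ sits \emph{inside} the region you want to show is negligible. The $\{|x|\ge R\}$ piece has the same defect: Lemma \ref{kuroda-lem} and Assumption (A) give pointwise bounds of order $\langle s\rangle^{-\delta}$, but you need an $L^2_{x,\xi}$ bound over an infinite-measure region, and the only mechanism producing one is the full-transform Plancherel identity --- i.e.\ a bound on $\|V(s)e^{-isH_0}\psi\|$ itself, which is precisely what you set out to prove. As structured, the bad-region estimate is circular.

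The repair is to abandon the phase-space splitting once you have committed to Cook's method: estimate $\|V(s)e^{-isH_0}\psi\|$ directly by splitting physical space into $\{|y|\le\epsilon s\}$ and $\{|y|>\epsilon s\}$, with $\epsilon$ below the minimal speed in $\mathrm{supp}\,\hat{\psi}$; Lemma \ref{kuroda-lem} gives rapid decay of $e^{-isH_0}\psi$ on the near region, and Assumption (A) gives $|V(s,y)|\le C\langle \epsilon s\rangle^{-\delta}$ on the far region, which is integrable since $\delta>1$ --- but then the wave packet transform plays no role. This is also genuinely different from the paper's proof, which never needs the full norm of $R_{\varphi_0}$: there the phase-space cutoff is placed on the \emph{initial datum} (approximating $u_0$ so that $W_\Phi u_0$ is compactly supported in $\mathbb{R}^{2n}\setminus\Gamma_{a,R}$), and the Cauchy property is proved weakly, pairing $\int_t^{t'}R_{\varphi_0}(s,x+s\xi,\xi;\psi)\,ds$ against $W_\Phi u_0$ for an \emph{arbitrary} test function $\psi\in L^2$; by Schwarz's inequality the pairing only sees $R_{\varphi_0}$ on $\mathrm{supp}\,W_\Phi u_0\subset\mathbb{R}^{2n}\setminus\Gamma_{a,R}$, which is exactly what Proposition \ref{no2-term} controls. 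If you want a wave-packet proof, that duality step --- localizing $u_0$ rather than $\psi$ --- is the missing idea.
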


\begin{proof}
Substituting $V(t-\tau,x)$ for $V(t,x)$, it suffices to show the case $\tau=0$.
We prove the existence in the case $t\rightarrow+\infty$ only.
In the other cases, they can be proved similarly.

Let $\Phi\in\mathcal{S}_{scat}$ and $u_0\in L^2$.
It suffices to prove the existence for $W_\Phi u_0\in C_0^\infty(\mathbb{R}^{2n}\setminus\{|\xi|=0\})$.
Indeed, let $\varepsilon>0$ fixed.
Since $C_0^\infty(\mathbb{R}^{2n}\setminus\{|\xi|=0\})$ is dense in $L^2(\mathbb{R}^{2n})$, there exists $\omega\in C_0^\infty(\mathbb{R}^{2n}\setminus\{|\xi|=0\})$ satisfying
$
\|W_\Phi u_0-\omega\|_{L^2(\mathbb{R}^{2n})}<\varepsilon.
$

Since $\|\Phi\|=1$, putting $\tilde{u_0}=W_\Phi^{-1}\omega$ we have
\begin{align*}
W_\Phi\tilde{u_0}=\omega,\quad\|\tilde{u_0}-u_0\|&=\|W_\Phi\tilde{u_0}-W_\Phi u_0\|_{L^2(\mathbb{R}^{2n})}\\
&=\|\omega-W_\Phi u_0\|_{L^2(\mathbb{R}^{2n})}<\varepsilon.
\end{align*}

Then there exist $a>0$ and $R>0$ such that supp\hspace{1pt}$\omega\subset\mathbb{R}^{2n}\setminus\Gamma_{a,R}$.
Let  $\varphi_0\in\mathcal{S}\setminus\{0\}$ satisfying
\[
\mbox{supp}\hat{\varphi_0}\subset\left\{\frac{r}2<|\xi|<r\right\}\mbox{ with }0<r\leq r_0,\quad |( \Phi,\varphi_0)|>0
\]
where $r_0$ is the value satisfying Proposition \ref{no2-term}.
Thus, we have by $(\ref{siki-psi2-e})$
\begin{align*}
\left(U(0,t)U_0(t,0)u_0,\psi\right)
&=\left( u_0,U_0(0,t)U(t,0)\psi\right)\\
&=\frac{1}{\left(\varphi_{0},\Phi\right)}\left( W_\Phi u_0,W_{\varphi_{0}}[U_0(0,t)U(t,0)\psi]\right)_{L^2(\mathbb{R}^{2n})}\\
&=\frac{1}{(\varphi_{0},\Phi)}\left(W_\Phi u_0,W_{\varphi_{0}}\psi+\int_0^tR_{\varphi_0}(s,x+s\xi,\xi;\psi)ds\right)_{L^2(\mathbb{R}^{2n})}.
\end{align*}

Using Lemma \ref{no2-term} and Schwarz's inequality, we obtain for $t'>t\geq T$
\begin{align*}
\left|\left( W_\Phi u_0,\int_{t}^{t'}R_{\varphi_0}(s,x+s\xi,\xi;\psi)ds\right)_{L^2({\mbox{supp}W_\Phi u_0})}\right|\\
&\hspace{-30mm}\leq\|W_\Phi u_0\|_{L^2(\mathbb{R}^{2n})}\cdot \int_t^{t'}\|R_{\varphi_0}(s,x+s\xi,\xi;\psi)\|_{L^2(\mathbb{R}^{2n}\setminus\Gamma_{a,R})} ds\\
&\hspace{-30mm}\leq C\|u_0\|\cdot \int_t^{t'}C\langle s\rangle^{-\delta'}\|\psi\| ds\\
&\hspace{-30mm}\leq C \langle t\rangle^{1-\delta}\|u_0\|\cdot\|\psi\|
\end{align*}
if $T=T(a,R)$ is sufficiently large.
This implies the existence of the wave operator $W_+(0)$.
\end{proof}
%
%
\section{Characterization of the wave operators}
In this section, we characterize the ranges of the wave operators by the wave packet transform.

\begin{Prop}\label{PropComp}
Suppose that (A) be satisfied.
Then we have
\[
\mathcal{R}(W_{\pm}(\tau))=D_{scat}^{\pm,\Phi}(\tau)
\]
for any $\Phi\in\mathcal{S}_{scat}$.
\end{Prop}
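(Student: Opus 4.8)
The plan is to fix $\tau=0$ (replacing $V(t,x)$ by $V(t-\tau,x)$, as in Proposition~\ref{PropExis}) and to treat the $t\to+\infty$ case, the other being symmetric. Throughout I will use that $W_+(0)$ is an isometry, being a strong limit of the unitaries $U(0,t)e^{-itH_0}$, so $\mathcal{R}(W_+(0))$ is closed; that $W_\varphi$ is, up to the constant $\|\varphi\|$, an isometry onto its range (Proposition~\ref{wpt-prop}(iii),(iv)); and that it converts $e^{\mp itH_0}$ into the shift $(x,\xi)\mapsto(x\pm t\xi,\xi)$ via \eqref{siki-psi}, \eqref{siki-psi2-e}. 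I would prove the two inclusions $\mathcal{R}(W_+(0))\subseteq D_{scat}^{+,\Phi}(0)$ and $D_{scat}^{+,\Phi}(0)\subseteq\mathcal{R}(W_+(0))$ separately.

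For \emph{range $\subseteq$ scattering}, let $f=W_+(0)u_0$ and $\varepsilon>0$. As in Proposition~\ref{PropExis} choose $\omega\in C_0^\infty(\mathbb{R}^{2n}\setminus\{|\xi|=0\})$ with $\mathrm{supp}\,\omega\subset\mathbb{R}^{2n}\setminus\Gamma_{a,R}$ for some $a,R>0$ and $\|W_\Phi u_0-\omega\|<\varepsilon$; put $\tilde u_0=W_\Phi^{-1}\omega$ and $\tilde f=W_+(0)\tilde u_0$, so that $\|\tilde f-f\|=\|\tilde u_0-u_0\|<\varepsilon$ by isometry. Write $U(t,0)\tilde f=e^{-itH_0}\tilde u_0+\eta(t)$. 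Since $U(t,0)$ is unitary and $W_+(0)\tilde u_0=\lim_t U(0,t)e^{-itH_0}\tilde u_0$, we get $\|\eta(t)\|=\|\tilde f-U(0,t)e^{-itH_0}\tilde u_0\|\to 0$ purely from the \emph{existence} of $W_+(0)$. By \eqref{siki-psi}, $W_{\Phi(t)}[e^{-itH_0}\tilde u_0](x+t\xi,\xi)=e^{-\frac12 it|\xi|^2}\omega(x,\xi)$, whose $\chi_{\Gamma_{a,R}}$-cutoff vanishes identically; hence $\chi_{\Gamma_{a,R}}W_{\Phi(t)}[U(t,0)\tilde f](x+t\xi,\xi)=\chi_{\Gamma_{a,R}}W_{\Phi(t)}[\eta(t)](x+t\xi,\xi)$ has $L^2$-norm $\le\|\eta(t)\|\to 0$. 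Thus $\tilde f\in\tilde D_{scat}^{+,\Phi}(0)$, and letting $\varepsilon\to 0$ gives $f\in D_{scat}^{+,\Phi}(0)$.

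For \emph{scattering $\subseteq$ range}, since the range is closed it suffices to treat $f\in\tilde D_{scat}^{+,\Phi}(0)$. I claim the asymptotic datum $u_0:=\lim_{t\to\infty}e^{itH_0}U(t,0)f$ exists; granting this, unitarity gives $\|U(0,t)e^{-itH_0}u_0-f\|=\|u_0-e^{itH_0}U(t,0)f\|\to 0$, so $f=W_+(0)u_0\in\mathcal{R}(W_+(0))$. To produce $u_0$ I use the Cauchy criterion together with the identity $\frac{d}{ds}e^{isH_0}U(s,0)f=-ie^{isH_0}V(s)U(s,0)f$, which reduces existence to
\[
\int^{\infty}\|V(s)U(s,0)f\|\,ds<\infty .
\]
Splitting $V(s)=V(s)\chi_{|y|\ge\rho s}+V(s)\chi_{|y|<\rho s}$, the far piece obeys $|V|\le C\langle\rho s\rangle^{-\delta}$ by Assumption~(A), contributing $\le C\langle s\rangle^{-\delta}\|f\|$, which is integrable since $\delta>1$; the near piece is bounded by $C\|\chi_{|y|<\rho s}U(s,0)f\|$.

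The hard part will be to deduce $\int^\infty\|\chi_{|y|<\rho s}U(s,0)f\|\,ds<\infty$ from the merely qualitative scattering hypothesis. By \eqref{siki-psi2-e} and the isometry of $W_\Phi$, the hypothesis is equivalent to $\|\chi_{\Gamma_{a,R}}W_\Phi v(s)\|\to 0$ for $v(s)=e^{isH_0}U(s,0)f$, i.e.\ $v(s)$ concentrates in phase space in $\{|\xi|>a,\ |x|<R\}$. I would decompose $v(s)=v_{\mathrm{good}}(s)+v_{\mathrm{bad}}(s)$ with $v_{\mathrm{bad}}(s)=W_\Phi^{-1}[\chi_{\Gamma_{a,R}}W_\Phi v(s)]$, so that $\|v_{\mathrm{bad}}(s)\|\to 0$, and estimate $\chi_{|y|<\rho s}U(s,0)f=\chi_{|y|<\rho s}e^{-isH_0}v_{\mathrm{good}}(s)+\chi_{|y|<\rho s}e^{-isH_0}v_{\mathrm{bad}}(s)$; for $\rho<a$ the good part should vacate $|y|<\rho s$ at a rapid, hence integrable, rate via the dispersive estimate Lemma~\ref{kuroda-lem}, while the $\Gamma_{a,R}$ contribution is meant to be absorbed by the annulus-window bound of Proposition~\ref{no2-term}. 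The genuine obstacle is converting the $\to 0$ of $\|v_{\mathrm{bad}}(s)\|$ into time-integrability and controlling the frequency spread of the fixed window $\Phi$ (which blurs the sharp localization $|\xi|>a$); I expect this to force a further reduction to a dense class of $f$ for which $v_{\mathrm{good}}(s)$ has uniformly bounded higher Sobolev norms and frequency support bounded away from the origin. This step plays the role that pseudodifferential phase-space cutoffs play in the Enss method, here realized by the single multiplication operator $\chi_{\Gamma_{a,R}}$.
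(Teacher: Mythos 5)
Your first inclusion, $\mathcal{R}(W_+(0))\subseteq D_{scat}^{+,\Phi}(0)$, is correct and is essentially the paper's own argument: approximate $u_0$ by $\tilde u_0=W_\Phi^{-1}\omega$ with $\omega$ supported off $\Gamma_{a,R}$, use the existence of $W_+(0)$ to write $U(t,0)\tilde f=e^{-itH_0}\tilde u_0+\eta(t)$ with $\|\eta(t)\|\to0$, and kill the free part with \eqref{siki-psi}.

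The second inclusion, however, has a genuine gap, which you yourself flag. You reduce the existence of $u_0=\lim_{t\to\infty}e^{itH_0}U(t,0)f$ to Cook's criterion $\int^{\infty}\|V(s)U(s,0)f\|\,ds<\infty$, hence to $\int^{\infty}\|\chi_{|y|<\rho s}U(s,0)f\|\,ds<\infty$. But membership in $\tilde D_{scat}^{+,\Phi}(0)$ is purely qualitative: it gives $\|\chi_{\Gamma_{a,R}}W_{\Phi(t)}[U(t,0)f](x+t\xi,\xi)\|\to0$ with \emph{no rate}, so the term $\|v_{\mathrm{bad}}(s)\|$ in your decomposition tends to zero but possibly arbitrarily slowly, and its time integral can diverge. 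Absolute integrability is strictly stronger than existence of the limit, and nothing in the hypothesis supplies it; nor can the ``further reduction to a dense class'' you anticipate repair this, because $\tilde D_{scat}^{+,\Phi}(0)$ carries no quantitative information to pass to a dense subclass, and density cannot cure a divergent integral sitting inside your Cauchy estimate. So the step you call the hard part is not merely technical: as set up, it is the whole problem, and your route does not close it.

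The paper gets around this by never asking for a decay rate along $U(s,0)f$. Fixing $u_0\in\tilde D_{scat}^{+,\Phi}(0)$ and $a,R$ as in \eqref{siki-gc}, it pairs with an arbitrary $\psi\in L^2$ and uses Proposition \ref{wpt-prop}(iii) to write $\bigl(e^{itH_0}U(t,0)u_0-e^{it'H_0}U(t',0)u_0,\psi\bigr)$ as a phase-space pairing of $W_{\Phi(t)}[U(t,0)u_0]$ against $W_{\varphi(t)}[e^{-itH_0}\psi]-W_{\varphi(t)}[U(t,t')e^{-it'H_0}\psi]$, then splits $\chi_{\Gamma}+\chi_{\Gamma^c}$ on the $u_0$ side. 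On $\Gamma$, the hypothesis \eqref{siki-gc} is used only at the single time $t$ (no time integration occurs there), so qualitative convergence to $0$ suffices, giving \eqref{siki-G}. On $\Gamma^c$, the Duhamel integral \eqref{siki-psi2} is expanded on the \emph{test-function} side, where Proposition \ref{no2-term} yields the integrable bound $C\langle s\rangle^{-\delta}\|\psi\|$ for arbitrary $\psi$; integrating in $s$ gives $C\|u_0\|\|\psi\|\langle t\rangle^{1-\delta}$ as in \eqref{siki-Gc}. Taking the supremum over $\|\psi\|=1$ gives the Cauchy property of $e^{itH_0}U(t,0)u_0$ in norm, hence $u_0\in\mathcal{R}(W_+(0))$. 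In short, the quantitative, $s$-integrable decay is extracted from the free evolution of the dual vector $\psi$ via Proposition \ref{no2-term}, not from $f$ itself; this rearrangement is precisely the ingredient your direct Cook-type argument lacks.
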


\begin{proof}
Similar to the previous section, we shall prove
\begin{align}
\label{siki-compl}
\mathcal{R}(W_+(0))=D_{scat}^{+,\Phi}(0).
\end{align}
In the other cases, they can be proved similarly.

Let $\Phi\in\mathcal{S}_{scat}$ and $\varepsilon>0$ fixed.
For simplicity, we write $W_+=W_+(0)$, $D_{scat}^+=D_{scat}^{+,\Phi}(0)$.

We first prove
\begin{align}
\label{siki-sub}
\mathcal{R}(W_+)\subset D_{scat}^+.
\end{align}
Let $f\in\mathcal{R}(W_+)$. Then since $W_{\Phi}^{-1}(C_0^\infty(\mathbb{R}^{2n}\setminus\{|\xi|=0\}))$ is dense in $L^2$,
there exists $g\in W_{\Phi}^{-1}(C_0^\infty(\mathbb{R}^{2n}\setminus\{|\xi|=0\}))$ such that
\begin{align}
\label{siki-d-kinzi}
\|f-W_+g\|<\varepsilon.
\end{align}

By the definition of $W_+$ and $(\ref{siki-psi})$, we have
\begin{align}
\label{siki-compp}
\lim_{t\rightarrow\infty}\|U(t,0)W_+g-e^{-itH_0}g\|= 0,\quad W_{\Phi(t)}[e^{-itH_0}g](x+t\xi,\xi)=W_{\Phi}g(x,\xi).
\end{align}

For $g\in W_{\Phi}^{-1}(C_0^\infty(\mathbb{R}^{2n}\setminus\{|\xi|=0\}))$,
there exist $a>0$ and $R>0$ such that
$
\mbox{supp}W_{\Phi}g\subset\mathbb{R}^{2n}\setminus\Gamma_{a,R}.
$

Thus from $(\ref{siki-d-kinzi})$, $(\ref{siki-compp})$, we obtain
\[
\lim_{t\rightarrow\infty}\|\chi_{\Gamma_{a,R}}W_{\Phi(t)}[U(t,0)f](x+t\xi,\xi)\|_{L^2(\mathbb{R}^{2n})}\leq\varepsilon.
\]

Thus $(\ref{siki-sub})$ is proved.

Next we prove
\begin{align}
\label{siki-sup}
\mathcal{R}(W_+)\supset D_{scat}^+.
\end{align}
If the limit
\begin{align}
\label{siki-inv-wo}
W_{+}^{-1}u_0=\mbox{s-}\hspace{-2mm}\lim_{t\rightarrow+\infty}e^{itH_0}U(t,0)u_0
\end{align}
exists for any $u_0\in D_{scat}^+$,
$(\ref{siki-sup})$ is obtained.

It suffices to prove $(\ref{siki-inv-wo})$ for $u_0\in\tilde{D}_{scat}^{+,\Phi}(0)$.
Indeed, from the definition of $D_{scat}^+$, for any $\varepsilon>0$ and $u_0\in D_{scat}^+$ there exists $\tilde{u_0}\in\tilde{D}_{scat}^{+,\Phi}(0)$
such that $\|u_0-\tilde{u_0}\|<\varepsilon$.

Let $u_0\in\tilde{D}_{scat}^{+,\Phi}(0)$.
Then we fix $a>0$ and $R>0$ satisfying
\begin{align}
\label{siki-gc}
\lim_{t\rightarrow\infty}\|\chi_{\Gamma_{a,R}}W_{\Phi(t)}[U(t,0)u_0](x+t\xi,\xi)\|_{L^2(\mathbb{R}^{2n})}=0
\end{align}
and write $\Gamma=\Gamma_{a,R}$, $\Gamma^c=\mathbb{R}^{2n}\setminus\Gamma$.

From Proposition \ref{wpt-prop}, we have
\begin{align*}
\left( e^{itH_0}U(t,0)u_0,\psi\right)
&=\left( U(t,0)u_0,e^{-itH_0}\psi\right)\\
&=\frac{1}{\left(\varphi(t),\Phi(t)\right)}\left( W_{\Phi(t)}[U(t,0)u_0],W_{\varphi(t)}[e^{-itH_0}\psi]\right)_{L^2(\mathbb{R}^{2n})}\\
&=\frac{1}{\left(\varphi_{0},\Phi\right)}\left( \chi_{\Gamma}(x-t\xi,\xi)W_{\Phi(t)}[U(t,0)u_0],W_{\varphi(t)}[e^{-itH_0}\psi]\right)_{L^2(\mathbb{R}^{2n})}\\
&+\frac{1}{\left(\varphi_{0},\Phi\right)}\left( \chi_{\Gamma^c}(x-t\xi,\xi)W_{\Phi(t)}[U(t,0)u_0],W_{\varphi(t)}[e^{-itH_0}\psi]\right)_{L^2(\mathbb{R}^{2n})},\\
\left( e^{it'H_0}U(t',0)u_0,\psi\right)
&=\left( U(t,0)u_0,U(t,t')e^{-it'H_0}\psi\right)\\
&=\frac{1}{\left(\varphi_{0},\Phi\right)}\left( \chi_{\Gamma}(x-t\xi,\xi)W_{\Phi(t)}[U(t,0)u_0],W_{\varphi(t)}[U(t,t')e^{-it'H_0}\psi]\right)_{L^2(\mathbb{R}^{2n})}\\
&+\frac{1}{\left(\varphi_{0},\Phi\right)}\left( \chi_{\Gamma^c}(x-t\xi,\xi)W_{\Phi(t)}[U(t,0)u_0],W_{\varphi(t)}[U(t,t')e^{-it'H_0}\psi]\right)_{L^2(\mathbb{R}^{2n})}.
\end{align*}

Thus the difference between the above equations is that
\begin{align*}
\Big(&e^{itH_0}U(t,0)u_0-e^{it'H_0}U(t',0)u_0,\psi\Big)\\
&=\frac{1}{\left(\varphi_{0},\Phi\right)}\left( \chi_{\Gamma}(x-t\xi,\xi)W_{\Phi(t)}[U(t,0)u_0],W_{\varphi(t)}[e^{-itH_0}\psi]-W_{\varphi(t)}[U(t,t')e^{-it'H_0}\psi]\right)_{L^2(\mathbb{R}^{2n})}\\
&+\frac{1}{\left(\varphi_{0},\Phi\right)}\left( \chi_{\Gamma^c}(x-t\xi,\xi)W_{\Phi(t)}[U(t,0)u_0],W_{\varphi(t)}[e^{-itH_0}\psi]-W_{\varphi(t)}[U(t,t')e^{-it'H_0}\psi]\right)_{L^2(\mathbb{R}^{2n})}.
\end{align*}

Using Schwarz's inequality and $(\ref{siki-gc})$, we obtain
\begin{align}
\label{siki-G}
\left|\left( \chi_{\Gamma}(x-t\xi,\xi)W_{\Phi(t)}[U(t,0)u_0],W_{\varphi(t)}[e^{-itH_0}\psi]-W_{\varphi(t)}[U(t,t')e^{-it'H_0}\psi]\right)_{L^2(\mathbb{R}^{2n})}\right|\nonumber\\
&\hspace{-90mm}\leq2\|\chi_{\Gamma}W_{\Phi(t)}[U(t,0)u_0](x+t\xi,\xi)\|\|\varphi_0\|\|\psi\|
\rightarrow 0.\quad(t\rightarrow+\infty)
\end{align}

Since $(\ref{siki-psi})$, $(\ref{siki-psi2})$ and the similar calculation of Proposition \ref{no2-term}, we have for $t'\geq t\geq T$
\begin{align}
\label{siki-Gc}
\Big|&\Big(\chi_{\Gamma^c}(x-t\xi,\xi)W_{\Phi(t)}[U(t,0)u_0],W_{\varphi(t)}[e^{-itH_0}\psi]-W_{\varphi(t)}[U(t,t')e^{-it'H_0}\psi]\Big)_{L^2(\mathbb{R}^{2n})}\Big|\nonumber\\
&=\Bigg|\Bigg( W_{\Phi(t)}[U(t,0)u_0],\chi_{\Gamma^c}(x-t\xi,\xi)\int_t^{t'} e^{-i\frac{1}{2}(t-s) |\xi|^2}
R_{\varphi_0}(s,x-(t-s)\xi,\xi;e^{-it'H_0}\psi)ds\Bigg)\Bigg|\nonumber\\
&=\Bigg|\Bigg( W_{\Phi(t)}[U(t,0)u_0](x+t\xi,\xi),\chi_{\Gamma^c}\int_t^{t'} e^{-i\frac{1}{2}(t-s) |\xi|^2}
R_{\varphi_0}(s,x+s\xi,\xi;e^{-it'H_0}\psi)ds\Bigg)\Bigg|\\
&\leq\|u_0\|\|\varphi_0\|\|\psi\|\langle t\rangle^{-\delta+1},\nonumber
\end{align}
if $T=T(a,R)$ is sufficiently large.
$(\ref{siki-inv-wo})$ follows from $(\ref{siki-G})$ and $(\ref{siki-Gc})$.
Combining $(\ref{siki-sub})$, $(\ref{siki-sup})$, we obtain $(\ref{siki-compl})$.
\end{proof}
Theorem \ref{main-theorem} is obtained by Proposition \ref{PropExis} and \ref{PropComp}

%
%
\section{Remarks}
In this section, we consider the remarks in Section 1.
%
%

The assertion of Remark \ref{rem-asu} follows the estimate
\[
\|I_1(s,x,\xi;\psi)\|\leq Ch(s)\|\psi\|,
\]
where $I_1(s,x,\xi;\psi)$ is the term of Proposition \ref{no2-term} and $C$ is some constant.

We prove Remark \ref{rem-eigen} as follows.
\begin{proof}[Proof of Remark \ref{rem-eigen}]
Since the existence of the wave operators and the fact $\mathcal{R}(W_{\pm}(\tau))=D_{scat}^{\pm}(\tau)$ are proved in Theorem 1,
it suffices to show $D_{scat}^{\pm}(\tau)\perp\mathcal{H}_{p}$.

Similar to the previous sections, we shall prove for $\tau=0$ and $t\rightarrow+\infty$.
Let $u_0\in \tilde{D}_{scat}^{+}()$ and $\Phi\in\mathcal{S}_{scat}$.
Then there exist $a,R>0$  such that
\begin{align}
\label{siki-aa}
\lim_{t\rightarrow\infty}\|\chi_{\Gamma_{a,R}}W_{\Phi(t)}[e^{-itH}u_0](x+t\xi,\xi)\|_{L^2(\mathbb{R}^{2n})}=0.
\end{align}

On the other hand, if we take
$\omega\in\mathcal{H}_{p}(H)$,
it suffices that
\begin{align}
\label{siki-ab}
e^{-itH}\omega=e^{-it\lambda}\omega.
\end{align}

Then taking $\varphi_0\in \mathcal{S}$ similarly to section 3, we get
\begin{align*}
(u_0,\omega)
&=(e^{-itH}u_0,e^{-itH}\omega)\\
&=\frac{1}{\left(\varphi(t),\Phi(t)\right)}\left(W_{\Phi(t)} [e^{-itH}u_0],W_{\varphi(t)}[e^{-itH}\omega]\right)_{L^2(\mathbb{R}^{2n})}\\
&=\frac{1}{\left(\varphi_{0},\Phi\right)}\left(\chi_{\Gamma_{a,R}}W_{\Phi(t)} [e^{-itH}u_0](x+t\xi,\xi),W_{\varphi(t)}[e^{-itH}\omega](x+t\xi,\xi)\right)_{L^2(\mathbb{R}^{2n})}\\
&+\frac{1}{\left(\varphi_{0},\Phi\right)}\left(W_{\Phi(t)} [e^{-itH}u_0](x+t\xi,\xi),
\chi_{\{\mathbb{R}^{2n}\setminus\Gamma_{a,R}\}}W_{\varphi(t)}[e^{-itH}\omega](x+t\xi,\xi)\right)_{L^2(\mathbb{R}^{2n})}.
\end{align*}
By $(\ref{siki-aa})$, the first term is estimated by
\begin{align*}
\Big|\frac{1}{\left(\varphi_{0},\Phi\right)}&\left(\chi_{\Gamma_{a,R}}W_\Phi [e^{-itH}u_0](x+t\xi,\xi),W_{\varphi_{0}}[e^{-itH}\omega](x+t\xi,\xi)\right)_{L^2(\mathbb{R}^{2n})}\Big|\\
&\leq C\|\chi_{\Gamma_{a,R}}W_\Phi [e^{-itH}u_0](x+t\xi,\xi)\|_{L^2(\mathbb{R}^{2n})}\|\omega\|\rightarrow0.\quad(t\rightarrow\infty)
\end{align*}
By $(\ref{siki-ab})$, the second term is estimated by
\begin{align}
\label{siki-ac}
\Big|\frac{1}{\left(\varphi_{0},\Phi\right)}&\left(W_{\Phi(t)} [e^{-itH}u_0](x+t\xi,\xi),
\chi_{\{\mathbb{R}^{2n}\setminus\Gamma_{a,R}\}}W_{\varphi(t)}[e^{-itH}\omega](x+t\xi,\xi)\right)_{L^2(\mathbb{R}^{2n})}\Big|\nonumber\\
=\Big|\frac{1}{\left(\varphi_{0},\Phi\right)}&\left(W_{\Phi(t)} [e^{-itH}u_0](x+t\xi,\xi),
\chi_{\{\mathbb{R}^{2n}\setminus\Gamma_{a,R}\}}W_{\varphi(t)}[e^{-it\lambda}\omega](x+t\xi,\xi)\right)_{L^2(\mathbb{R}^{2n})}\Big|\nonumber\\
&\leq C\|u_0\|\|\chi_{\{\mathbb{R}^{2n}\setminus\Gamma_{a,R}\}}W_{\varphi(t)}[\omega](x+t\xi,\xi)\|_{L^2(\mathbb{R}^{2n})}\rightarrow0.\quad(t\rightarrow\infty)
\end{align}
$(\ref{siki-ac})$ follows the density argument and Lemma \ref{kuroda-lem}.
Therefore we obtain $D_{scat}^{\pm}(0)\perp\mathcal{H}_{p}(H)$.
\end{proof}

Since $W_{\Phi}^{-1}(C_0^\infty(\mathbb{R}^{2n}\setminus\tilde{\Gamma}_{0,1}))$ is dense in $L^2$, 
the same proof in the previous section follows $(\ref{AeqD})$.
Thus we get the claim of Remark \ref{rem-ana}.


\end{document}